\theoremstyle{definition}
\newtheorem{algorithm}{Algorithm}
\newtheorem{prop}{Proposition}
\theoremstyle{remark}
\numberwithin{equation}{section}
\newcommand{\ratl}{\mathbb{Q}}
\newcommand{\ints}{\mathbb{Z}}
\newcommand{\field}{\mathbb{F}}
\newcommand{\pideal}{\mathfrak{p}}
\newcommand{\ok}{\mathcal{O}_K}
\begin{document}

\title{Generating random factored ideals in number fields}


\author{Zachary Charles}
\address{University of Wisconsin-Madison, Dept. of Mathematics}
\curraddr{}
\email{zcharles@math.wisc.edu}
\thanks{The author would like to thank Eric Bach for his invaluable advice, suggestions, and encouragement. The author was supported by the National Science Foundation grant DMS-1502553.}


\date{}

\dedicatory{}

\begin{abstract}
We present a randomized polynomial-time algorithm to generate an ideal and its factorization uniformly at random in a given number field. We do this by generating a random integer and its factorization according to the distribution of norms of ideals at most $N$ in the given number field. Using this randomly generated norm, we can produce a random factored ideal in the ring of algebraic integers uniformly at random among ideals with norm up to $N$, in randomized polynomial time. We also present a variant of this algorithm for generating random factored ideals in function fields.
\end{abstract}

\maketitle

\section{Introduction}

We consider a generalization of the following problem: Given an integer $N > 0$, generate an integer in $[1,N]$ uniformly at random, along with its prime factorization, in polynomial time. Since there are currently no known polyonmial time factorization algorithms, we cannot simply generate an integer and factor it. Instead, we can generate the prime factorization uniformly at random.

In his thesis, Bach gave a randomized polynomial time method to uniformly produce a factored integer in $[\frac{N}{2},N]$~\cite{Bac1}. Bach's method uses only an expected number of $\log N$ primality tests in $[1,N]$~\cite{Bac2}. Since we can test for primality in polynomial time by the work of Agrawal, Kayal, and Saxena~\cite{AKS}, Bach's algorithm runs in randomized polynomial time. In 2003, Kalai gave another method for doing this using a conceptually simpler but slower method. Kalai's algorithm uses an expected number of $O(\log(N)^2)$ primality tests~\cite{Kal}. In 2016, Lebowitz-Lockard and Pomerance gave a variant of Kalai's algorithm to produce random factored elements on $\ints[i]$ with norm at most $N$~\cite{LLP}. This paper gives a generalization of these algorithms that will produce random factored ideals in $\mathcal{O}_K$ for any number field $K$. The method given will be polynomial in $N$ and the degree $d$, where we fix $d$ and assume $N$ tends to infinity.

We first discuss Kalai's algorithm and give a brief analysis. We then use these ideas to generate, in polynomial time, an integer in $[1,N]$ according to the distribution of norms of ideals in $\mathcal{O}_K$. We then discuss how to use this algorithm to produce random factored ideals of $\ok$ with norm up to $N$ uniformly at random using $O((\log N)^{d^2+d+1})$ primality tests. This algorithm is then modified to generate random factored ideals in function fields using $O((\log N)^{d^2+d+1})$ primality tests. While the algorithm described by Agrawal, Kayal, and Saxena runs in $O(log^{15/2} N)$, Lenstra and Pomerance later developed a primality testing algorithm with run time $O(\log^6 N)$ ~\cite{Len}. Using the latter algorithm, our algorithms for generating random factored ideals in number fields and function fields have expected run times that are $O((\log N)^{d^2+d+7})$.

\section{Kalai's Algorithm}

In his 2003 paper~\cite{Kal}, Kalai presents the following algorithm:

\begin{algorithm}

Input: A positive integer $N$.\\
Output: A positive integer $r \in [1, N]$ and its prime factorization, produced according to a uniform distribution.

1. Generate a sequence $N \geq s_1 \geq s_2 \geq \ldots \geq s_l = 1$ where $ s_1 \in \{1,\ldots, N\}, s_{i+1} \in \{1,\ldots, s_i\}$ are chosen uniformly at random. Stop when $s_j = 1$.\\
2. Let $r$ be the product of the prime $s_i$'s.\\
3. If $r \leq N$ output $r$ with probability $\frac{r}{N}$.\\
4. If $r > N$ or we do not accept $r \leq N$, return to step 1.\end{algorithm}

Let $p$ be a prime number. Then there is a first number $s_i$ produced in the range $\{1,2,\ldots, p\}$. It is chosen uniformly at random, so the probability that $s_i = p$ is $\frac{1}{p}$. Generating $e$ factors $p$ then occurs with probability $\frac{1}{p^e}(1-\frac{1}{p})$. The probability of outputting a given $r$ is then:
\begin{align*}
\frac{r}{N}Pr\bigg[r = \prod_{p \leq n} p^{v_p(r)}\bigg] &= \frac{r}{N}\prod_{p \leq N}\bigg(\frac{1}{p}\bigg)^{v_p(r)}\bigg(1-\frac{1}{p}\bigg)\\
&= \frac{r}{N}\frac{1}{r}\prod_{p \leq n}\bigg(1-\frac{1}{p}\bigg)\\
&= \frac{M_N}{N}
\end{align*}

Here $M_N = \prod_{p\leq N}(1-1/p)$. Therefore, if the algorithm terminates, it produces $r$ and its prime factorization with probability $\frac{1}{N}$. Moreover, the probability that Kalai's algorithm terminates after a single round is $M_N$. We therefore expect $M_N^{-1}$ trials before we output a number.

We now show that the expected number of primality tests is $O(\log(N)^2)$. Note that given some $s_i > 1$, the expected value of $s_{i+1}$ is $\frac{s_i}{2}$. Therefore, a given list $s_1, \ldots, s_l$ has expected length $O(\log N)$. We need an expected number of $O(\log N)$ primality tests for every round of the algorithm. Hence, we do an expected number of $O(M_N^{-1}\log N)$ primality tests overall. By Mertens' theorem, $M_N^{-1} = O(\log N)$. Therefore Kalai's algorithm uses $O(\log(N)^2)$ primality tests in expectation before the algorithm terminates.



\section{Generating Random Factored Norms}\label{gen_rand}

We now give a generalization of Kalai's algorithm that produces a random norm of an ideal in $\ok$, along with its prime factorization. Let $K$ be a number field of degree $d$, and $N$ an integer satisfying $d << \log(N)$. The algorithm will use a polynomial number of primality tests. In order to do so, we need knowledge of how the rational primes split. In general, this is computationally efficient for primes not dividing the discriminant of $K$ ~\cite{Mil1}. If $f(x)$ is the monic irreducible polynomial over $\ratl$ determining $K$, then we can determine how $p$ splits by factoring $f(x)$ modulo $\ints/p\ints$. Factoring polynomials over finite fields can be accomplished in randomized polynomial time~\cite{Knu}. The remaining prime numbers can be factored using a method of Chistov to factor polynomials in $\ratl_p$ in polynomial time~\cite{Chi}. We therefore assume we can factor rational primes in $K$.

For any integer $r > 0$, let $D(r)$ denote the number of ideals in $K$ of norm $r$. Let $p \in \ints$ be a rational prime with factorization $p\ok = \prod_{j=1}^{m} \pideal_j^{e_j}$. Let $N(\pideal_j) = p^{f_j}$. Finding $D(p^e)$ can now be reduced to determining the number of solutions to $\sum_{i=1}^m c_if_i = e$. This is an instance of the subset sum problem. We know that $e$ is $O(\log N)$. Therefore, we can solve the subset sum problem in $O(m\log N)$ operations~\cite{Cor}. Since $\sum_{i=1}^m e_if_i = d$, we know that $m \leq d$. The runtime for this becomes $O(\log N)$ as a result. Since $D(r)$ is multiplicative, we can calculate $D(r)$ for any $r$ relatively efficiently (certainly in polynomial time).

We will generate $r$ and keep $r$ with probability proportional to $\frac{rD(r)}{N}$. In order to guarantee a well-defined probability, we need to bound $D(r)$ by a factor that can be incorporated in to the selection of the factors. We use the following result. Let $\Omega(r)$ denote the number of prime factors of $r$ with multiplicity, and let $\Omega_d(r)$ be the number of prime factors (counting multiplicity) of $r$ that are greater than $d$.

\begin{prop}$D(r) \leq d^{\Omega(r)}$.\end{prop}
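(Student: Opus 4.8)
The plan is to reduce the statement to prime powers and then to a clean generating-function (equivalently, combinatorial) count. Since $D$ is multiplicative (as already noted) and $\Omega$ is completely additive, the function $r \mapsto d^{\Omega(r)}$ is multiplicative as well, so both sides of the claimed inequality are multiplicative in $r$. It therefore suffices to prove the inequality on prime powers, i.e. to show $D(p^e) \le d^e$ for every rational prime $p$ and every $e \ge 0$.

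Next I would make $D(p^e)$ explicit. Writing $p\ok = \prod_{j=1}^m \pideal_j^{e_j}$ with $N(\pideal_j) = p^{f_j}$, an ideal has norm $p^e$ exactly when it equals $\prod_{j=1}^m \pideal_j^{c_j}$ for nonnegative integers $c_j$ with $\sum_j c_j f_j = e$; here the ramification indices $e_j$ play no role, since an arbitrary ideal may take any nonnegative exponents on the $\pideal_j$. Hence $D(p^e)$ is the number of such solutions, which is the coefficient $[x^e]\prod_{j=1}^m (1-x^{f_j})^{-1}$. The one structural fact I would carry along is the fundamental identity $\sum_j e_j f_j = d$, which (as each $e_j \ge 1$) gives $\sum_j f_j \le d$.

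The heart of the argument is then a coefficientwise comparison of power series with nonnegative coefficients. Because $(1-x^f)^{-1}$ has a $1$ in each degree divisible by $f$ and $0$ elsewhere, while $(1-x)^{-f}$ has all coefficients $\ge 1$, we have $(1-x^f)^{-1} \preceq (1-x)^{-f}$ coefficientwise; multiplying these nonnegative series over $j$ yields $\prod_j (1-x^{f_j})^{-1} \preceq (1-x)^{-\sum_j f_j} \preceq (1-x)^{-d}$, the last step using $\sum_j f_j \le d$. Reading off the degree-$e$ coefficient gives $D(p^e) \le \binom{e+d-1}{e}$. Finally $\binom{e+d-1}{e} \le d^e$, since $\binom{e+d-1}{e}$ counts the size-$e$ multisets drawn from a $d$-element alphabet, which is at most the number $d^e$ of length-$e$ words over that alphabet.

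I expect the main obstacle to be obtaining a bound \emph{tight enough} to reach $d^e$. A first attempt---observing that $\sum_j c_j \le \sum_j c_j f_j = e$ and bounding by the number of solutions of $\sum_j c_j \le e$ in $m \le d$ variables---gives only $\binom{e+d}{d}$, which already exceeds $d^e$ for small $e$ and is therefore useless. The correct move is to dominate the local factor $\prod_j (1-x^{f_j})^{-1}$ by $(1-x)^{-d}$, which uses the fundamental identity in an essential way and amounts to replacing each residue degree $f_j$ by the worst case of totally split behavior; once this comparison is in place, the remaining estimates are immediate.
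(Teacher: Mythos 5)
Your proof is correct, and its skeleton matches the paper's: both reduce to prime powers via multiplicativity, both identify the totally split situation as the extremal case, and both finish with the same counting step, bounding the number $\binom{d+e-1}{e}$ of size-$e$ multisets drawn from $d$ symbols by the number $d^e$ of length-$e$ words. The genuine difference is in the middle step. The paper merely asserts that $D(p^e)$ ``will be at its largest when the $\pideal_i$ are all distinct and all have norm $p$,'' offering no argument for this maximality; you actually prove it, by writing $D(p^e) = [x^e]\prod_j (1-x^{f_j})^{-1}$ and establishing the coefficientwise domination $\prod_j (1-x^{f_j})^{-1} \preceq (1-x)^{-\sum_j f_j} \preceq (1-x)^{-d}$, where the fundamental identity $\sum_j e_j f_j = d$ enters exactly where it should, to give $\sum_j f_j \leq d$. (Your observation that the ramification indices impose no constraint on the exponents $c_j$ is also correct and worth making explicit.) So your route buys rigor at precisely the point where the paper is weakest --- the paper even garbles the fundamental identity, writing $\sum_i N(\pideal_i)^{e_i} = d$ where it means $\sum_i e_i f_i = d$ --- at the modest cost of introducing generating-function machinery; the paper's version buys brevity, and its intuition (worst case equals totally split) is the same fact your series comparison formalizes.
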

\begin{proof}Note that $D(r)$ is multiplicative. This is because prime ideals in $\ok$ have norms that are powers of prime numbers~\cite{Mil1}. Constructing an ideal of norm $r = \prod_p p^{e_p}$ amounts to constructing an ideal of norm $p^{e_p}$ for each prime $p | r$.

Say $p\ok = \prod_{i=1}^m \pideal_i^{e_i}$ for $\pideal \subset \ok$ prime. The norm is multiplicative, and since $N(p\ok) = p^d$, we must have $\sum_{i=1}^m N(\pideal_i)^{e_i} = d$. Fix $e$. Note that $D(p^e)$ will be at its largest when the $\pideal$ are all distinct (ie. $e_i = 1$) and all the $\pideal_i$ have norm $p$. Then $D(p^e)$ will be the number of unordered sets of $e$ elements taken from the $d$ prime ideals. This is bounded above by the number ordered sets, which is given by $d^e = d^{\Omega(r)}$.\end{proof}

We now present the main algorithm. For simplicity of analysis, we demonstrate the case where $d$ is even and $N \equiv d-1\bmod{d}$. Let $k \in \ints$ be such that $N = kd+(d-1)$. These assumptions are not necessary, but help simplify minor details.

\begin{algorithm}
Input: A positive integer $N$.\\
Output: A random integer in $[1,N]$. The integer is generated according to the distribution of norms of ideals in $\ok$ with norm up to $N$.

1. Generate $\lfloor\frac{d}{2}\rfloor$ lists of integers as follows. For $b \in \{1,3,5,\ldots, d-1\}$, generate a list $N \geq s_{1,b} \geq s_{2,b} \geq \ldots \geq s_{l_b,b} = 1$. We take $s_{1,b} \in \{1,d+b, 2d+b, \ldots, kd+b\}$, where $s_{1,b} = 1$ with probability $\dfrac{1}{kd+1}$ and is any other element with probability $\dfrac{d}{kd+1}$. Take $s_{i+1,b}$ in $\{1, d+b, 2d+b,\ldots, s_{i,b}\}$ as 1 with probability $\dfrac{1}{s_{i,b}-b+1}$ and any other element with probability $\dfrac{d}{s_{i,b}-b+1}$.\\
2. Let $r$ be the product of the prime $s_{i,b}$.\\
3. For all primes $p$ between 1 and $d$, do the following: Multiply $r$ by $p$ with probability $\dfrac{p-1}{p}$, and continue to multiply by $p$ with this probability until your first failure.\\
4. If $r \leq N$, keep $r$ with probability $M_d(r)\dfrac{\psi(r)D(r)}{d^{\Omega_d(r)}N}$.\\
5. If you did not keep $r$, go to step 1.\\
\end{algorithm}

If $d$ is odd, then we take $b \in \{1,3,5,\ldots, d-2\}$. This way we ensure that we are only picking odd numbers. If $N$ is some other value mod $d$, say $N = kd+j$ for $j < d-1$ then we can instead do the following. Let $N'$ be the smallest number above $N$ such that $N' \equiv d-1\bmod{d}$. Run steps 1-3 of the algorithm above with $N'$ substituted for $N$. Then run step 4 by rejecting all $r > N$ instead. For simplicity, we will analyze the algorithm in the case that $N \equiv d-1\bmod{d}$.

Let $\overline{n}$ denote the residue of $n\bmod{d}$. We define $\psi(r)$ as follows. Let $\psi(r)$ have the same factors as $r$, except that for any prime $p | r, p > d$, replace the factor of $p$ with a factor of $p-\overline{p}+1$. Note that $\psi(r) \leq r$ and they share the same prime factors, with multiplicity, for $2 \leq p < d$.

We define $M_d(r)$ by:
\begin{center}$\displaystyle M_d(r) = \prod_{2 \leq p < d}\alpha\bigg(\dfrac{1}{p-1}\bigg)^{v_p(r)}$\\
$\displaystyle \alpha = \binom{d+\lfloor\log N\rfloor-1}{\lfloor\log N\rfloor}^{-1}$\end{center}

\section{Analysis of the algorithm}

We wish to show that the probability of accepting $r$ is a well-defined probability. Let $g(r)$ be the product of all prime factors $p$ of $r$, with multiplicity satisfying $p > d$. Note that since $D(r)$ is multiplicative, we have
$$ D(r) = D(g(r)) \prod_{2 \leq p \leq d} D(p^{v_p(r)})$$

By the discussion in section \ref{gen_rand}, $D(p^{v_p(r)}) \leq \binom{d+v_p(r)-1}{v_p(r)}$. This follows from the fact that $D(p^{v_p(r)})$ is maximized when all the primes ideals lying above $p$ are all distinct and have norm $p$. Then $D(p^e)$ equals the number of unordered sets of size $e$ taken from the $d$ prime ideals lying above $p$.

Using the above we find
\begin{align*}
& M_d(r)\dfrac{\psi(r)D(r)}{d^{\Omega_d(r)}N}\\
&= \prod_{2 \leq p \leq d}\bigg[\alpha\bigg(\dfrac{1}{p-1}\bigg)^{v_p(r)}D(p^{v_p(r)})\bigg]\dfrac{\psi(r)}{N}\dfrac{D(g(r))}{d^{\Omega_d(r)}}\\
&\leq \prod_{2 \leq p \leq d}\bigg[\alpha D(p^{v_p(r)})\bigg]\dfrac{\psi(r)}{N}\dfrac{D(g(r))}{d^{\Omega_d(r)}}\\
&\leq \prod_{2 \leq p \leq d}\bigg[\binom{d+\lfloor\log N\rfloor-1}{\lfloor\log N\rfloor}^{-1}\binom{d+v_p(r)-1}{v_p(r)}\bigg]\dfrac{\psi(r)}{N}\dfrac{D(g(r))}{d^{\Omega(g(r))}}\\
&\leq 1
\end{align*}

Let $p > d$ be an odd prime with $p \equiv \overline{p}\bmod{d}$. Then we will produce exactly $e$ factors of $p$ with probability given by:
\begin{center}$\displaystyle \bigg(\dfrac{d}{p-\overline{p}+1}\bigg)^e\bigg(1-\dfrac{d}{p-\overline{p}+1}\bigg)$\end{center}

Let $r$ be some integer at most $N$. Recall that $\psi(r)$ is formed from $r$ by replacing all prime factors $p | r$, $p > d$ by $p-\overline{p}+1$. In particular, $r$ and $\psi(r)$ have the same prime divisors for $p \leq d$. Let $Pr\bigg[s = \prod_{d < p \leq N}p^{v_p(r)}\bigg]$ denote the probability that after step $2$ we have generated an integer $s$ that is the product of the prime factors of $r$ that are larger than $d$. The probability can be worked out as follows:
\begin{align*}
Pr\bigg[s = \prod_{d < p \leq N}p^{v_p(r)}\bigg] &= \prod_{d < p \leq N}\bigg(\dfrac{d}{p-\overline{p}+1}\bigg)^{v_p(r)}\bigg(1-\dfrac{d}{p-\overline{p}+1}\bigg)\\
&= \prod_{d < p \leq N}\bigg(\bigg(\dfrac{d}{p-\overline{p}+1}\bigg)^{v_p(r)}\bigg)\prod_{d < p \leq N}\bigg(1-\frac{d}{p-\overline{p}+1}\bigg)\\
&= d^{\Omega_d(r)}\prod_{d < p \leq N} \bigg(\frac{1}{p-\overline{p}+1}\bigg)^{v_p(r)}\prod_{d < p \leq N}\bigg(1-\frac{d}{p-\overline{p}+1}\bigg)\\
&= \dfrac{d^{\Omega_d(r)}}{\psi(r)}\prod_{2 \leq p \leq d} p^{v_p(r)}\prod_{d < p \leq N}\bigg(1-\frac{d}{p-\overline{p}+1}\bigg)
\end{align*}

Note that this last step used the fact that by definition of $\psi(r)$, we have
$$\prod_{d < p \leq N} \bigg(\frac{1}{p-\overline{p}+1}\bigg)^{v_p(r)} = \dfrac{\prod_{2 \leq p \leq d}p^{v_p(r)}}{\psi(r)}$$

Given any $r \leq N$, the probability that we generate $r$ after step 3 is then given by:
\begin{align*}
&\dfrac{d^{\Omega_d(r)}}{\psi(r)}\prod_{2 \leq p \leq d}\bigg[\bigg(\dfrac{p-1}{p}\bigg)^{v_p(r)}\bigg(1-\frac{p-1}{p}\bigg)\bigg]\prod_{2 \leq p \leq d}p^{v_p(r)}\prod_{d < p \leq N}\bigg(1-\frac{d}{p-\overline{p}+1}\bigg)\\
&= \dfrac{d^{\Omega_d(r)}}{\psi(r)}\prod_{2 \leq p \leq d}(p-1)^{v_p(r)}\frac{1}{p}\prod_{d < p \leq N}\bigg(1-\frac{d}{p-\overline{p}+1}\bigg)\end{align*}

Finally, the probability that we accept this $r$ is then given by:
\begin{align*}
& M_d(r)\dfrac{\psi(r)D(r)}{d^{\Omega_d(r)}N}\dfrac{d^{\Omega_d(r)}}{\psi(r)}\prod_{2 \leq p \leq d}(p-1)^{v_p(r)}\frac{1}{p}\prod_{d < p \leq N}\bigg(1-\frac{d}{p-\overline{p}+1}\bigg)\\
&= \dfrac{D(r)}{N}\prod_{2 \leq p \leq d}\alpha\bigg(\dfrac{1}{p-1}\bigg)^{v_p(r)}(p-1)^{v_p(r)}\frac{1}{p}\prod_{d < p \leq N}\bigg(1-\frac{d}{p-\overline{p}+1}\bigg)\\
&= \dfrac{D(r)}{N}\prod_{2 \leq p \leq d}\bigg(\frac{\alpha}{p}\bigg)\prod_{d < p \leq N}\bigg(1-\frac{d}{p-\overline{p}+1}\bigg)\end{align*}

Note that other than $D(r)$, all terms depend only on $d$ and $N$. Therefore, this generates a number with probability proportional to $D(r)$.

We now show that the algorithm above runs in polynomial time, with polynomial many primality tests and factorizations of rational primes. Summing over all $r$ at most $N$, the probability that we generate an ideal is
\begin{align*}
\dfrac{\sum_{r \leq N} D(r)}{N}\prod_{2 \leq p < d}\bigg(\frac{\alpha}{p}\bigg)\prod_{d < p \leq N}\bigg(1-\frac{d}{p-\overline{p}+1}\bigg)\end{align*}

Let $\displaystyle Z_n = \prod_{d < p \leq N}\bigg(1-\frac{d}{p-\overline{p}+1}\bigg)$. By the Wiener-Ikehara Tauberian theorem (see ~\cite{Ike} for reference), $\sum_{r\leq N} D(r)$ asymptotically approaches $C_KN$, where $C_K$ is the residue of the Dedekind zeta function of $K$ at $1$. Asymptotically then, the expected number of trials is $O(\alpha^{-d}Z_N^{-1}\prod_{2 \leq p < d} p)$.

By direct computation, we have:
\begin{align*}
\alpha^{-1} &= \binom{d+\lfloor\log N\rfloor-1}{\lfloor\log N\rfloor}\\
&= (d-1+\lfloor\log N\rfloor)(d-2+\lfloor\log N\rfloor)\ldots (1+\lfloor\log N\rfloor)\\
&\leq (2\log N)^{d-1}\end{align*}

We have $d$ factors of $\alpha^{-1}$, so this contributes $O(\log(N)^{d^2-d})$ to the expected number of trials. Note that the term $\prod_{2\leq p < d} p$ contributes a term that is bounded by a constant $d^d$, and is therefore $O(1)$ in terms of $N$. We now wish to find the contribution of the remaining term in the probability calculation above.

Simple estimates show:

\begin{align*}
Z_N &\geq \prod_{d < p \leq 2d} \bigg(1-\frac{d}{d+1}\bigg)\prod_{2d < p \leq N} \bigg(1-\frac{d}{p-d}\bigg)\\
&\geq c\prod_{2d < p \leq N}\bigg(1-\frac{2d}{p}\bigg)\end{align*}

Here $c = \prod_{d < p \leq 2d} \bigg(1-\frac{d}{d+1}\bigg) = (d+1)^{-d}$. By standard estimates, such as in~\cite{Ros}, we have:
\begin{align*}
\prod_{2d < p \leq N} \bigg(1-\frac{2d}{p}\bigg)^{-1} = O(\log(N)^{2d})\end{align*}

Hence we need $O(\log(N)^{d^2+d})$ trials before success. Since any given list has expected length $O(\log(N))$, this leads to $O(\log(N)^{d^2+d+1})$ primality tests.

We now use the above algorithm to generate random ideals of $\ok$, uniformly at random among all ideals with norm up to $N$. As previously stated, we assume that we for any rational prime $p$, we can find the factorization $p\ok = \prod_{i=1}^m \pideal_i^{e_i}$ with $N(\pideal_i) = p^{f_i}$. To find an ideal of norm $p^e$, it suffices to solve, as previously discussed, the subset sum problem:
\begin{center}$\displaystyle \sum_{i=1}^m c_if_i = e$\end{center}

Any solution to this corresponds to the ideal $\prod_{i=1}^m \pideal^{c_i}$. Since $e \leq \log N$ and $m \leq d$, we can clearly find all solutions in $O(\log^d N$ operations~\cite{Cor}. While this is not optimal, even the naive approach is dwarfed by the effort needed to generate the norm. After we find all solutions, we can then choose uniformly at random one of the solutions, which will give us one of the ideals of the desired norm. Therefore, the algorithm runs in a number of operations that is $O(\log^{d^2+d+1} N)$ primality tests and factorizations of $p\ok$. The primality testing is the dominant part of this in terms of run-time. Since we can perform primality tests in $O(\log^6 N)$, this gives us a run time that is $O(\log^{d^2+d+7}N)$ overall.

\section{Function Fields}

The analogy between number fields and function fields suggests that there should be an analogous algorithm for function fields. In particular, there are well-known randomized polynomial-time algorithms for factoring over $\field_q[t]$. To generate a factored random polynomial in $\field_q[t]$, we could simply generate one at random and then factor it in randomized polynomial time. Much more elegant ways exist that generate the factorization at random, the same idea used by the methods above.

Therefore, we would expect the ability to translate the algorithm above to arbitrary function fields. Fix a function field $K$ of degree $d$ and $N > 0$. We want to generate a random ideal $I \subset \ok$ with norm $r(t) \in \field_q[t]$ of degree at most $N$, along with the factorization of $I$. We will use the fact that we can perform primality testing over $\field_q[t]$ in polynomial time. We will also assume that for any irreducible polynomial $f(t) \in \field_q[t]$, we can factor $f(t)\ok$ in polynomial time. Since $\ok$ is a Dedekind domain, this holds for all $f(t)$ not dividing the discriminant, so there are only finitely many $f(t)$ that need to be factored as a one-time operation.

As in number fields, the main obstacle is generating the norm $g(t)$ of $I$ with probability proportional to the number of ideals with this norm. Once we can do this, then we can use our ability to factor $g(t)$ over $\ok$ and solve the corresponding subset sum problem to generate an ideal of $\ok$ uniformly at random with its factorization.

Let $g \in \field_q[t]$. Then we can consider $g$ to be a number written base $q$ by looking at its coefficients. Let $n(g)$ denote this number. Let $D(g)$ denote the number of ideals in $\ok$ with norm $g$. For any number $n \in \ints$, using its $q$-ary expansion, we can form a corresponding element $\beta(n) \in \field_q[t]$.

We present the following algorithm for generating a norm $g \in \field_q[t]$ with probabiity proportional to $D(g)$. It is virtually identical to the algorithm above, except that our concept of primality of a number $p$ is replaced by primality of the corresponding element $\beta(p) \in \field_q[t]$.

\begin{algorithm}
Input: A positive integer $N$.\\
Output: A random element in $F_q[t]$ with degree at most $N$. The polynomial is generated according to the distribution of norms of ideals in $\ok$ with degree of their norm at most $N$.

1. Generate $\lfloor\frac{d}{2}\rfloor$ lists of integers as follows. For $b \in \{1,3,5,\ldots, d-1\}$, generate a list $q^{N+1} > s_{1,b} \geq s_{2,b} \geq \ldots \geq s_{l_b,b} = 1$. We take $s_{1,b} \in \{1,d+b, 2d+b, \ldots, kd+b\}$, where $s_{1,b} = 1$ with probability $\dfrac{1}{kd+1}$ and is any other element with probability $\dfrac{d}{kd+1}$. Take $s_{i+1,b}$ in $\{1, d+b, 2d+b,\ldots, s_{i,b}\}$ as 1 with probability $\dfrac{1}{s_{i,b}-b+1}$ and any other element with probability $\dfrac{d}{s_{i,b}-b+1}$.\\
2. Let $r$ be the product of the $s_{i,b}$ such that $\beta(s_{i,b})$ is prime.\\
3. For all integers $p$ between 1 and $d$ such that $\beta(p)$ is prime, do the following: Multiply $r$ by $p$ with probability $\dfrac{p-1}{p}$, and continue to multiply by $p$ with this probability until your first failure.\\
4. If $r < q^{N+1}$, return $\beta(r)$ with probability $M_d(r)\dfrac{\psi(r)D(\beta(r))}{d^{\Omega_d(\alpha(r))}q^N}$.\\
5. If you did not return $\beta(r)$, go to step 1.\\
\end{algorithm}

Let $\overline{n}$ denote the residue of $n\bmod{d}$. We define $\psi(r)$ as follows. Factor $\beta(r)$ in to primes $g_i$. For each $g_i$ such that $n(g_i) > d$, replace the factor of $n(g_i)$ with $n(g_i)-\overline{n(g_i)}+1$. Note that $\psi(r) \leq r$ and $\beta(r), \beta(\psi(r))$ share the same factors $g_i$ such that $2 \leq n(g_i) \leq d$.

We define $M_d(r)$ by:
\begin{center}$\displaystyle M_d(r) = \prod_{\substack{f \in \field_q[t]\text{, f is prime}\\ 2 \leq n(f) < d}}\alpha\bigg(\dfrac{1}{n(f)-1}\bigg)^{v_{n(f)}(r)}$\\
$\displaystyle \alpha = \binom{d+N\lfloor\log q\rfloor-1}{N\lfloor\log q\rfloor}^{-1}$\end{center}

An almost identical argument to the one above shows that this produces $r$ with probability proportional to $D(\beta(r))$. Moreover, the algorithm produces a norm with probability

\begin{align*}
\dfrac{\sum_{g \in \field_q[t], \deg(g) \leq N} D(g)}{q^N}\prod_{\substack{g \in \field_q[t]\\ 2 \leq n(g) \leq d}} \dfrac{\alpha}{n(g)} \prod_{\substack{g\in\field_q[t]\\ d < n(g) < q^{N+1}}}\bigg(1- \dfrac{d}{n(g)-\overline{n(g)}+1}\bigg)\end{align*}

We first want to analyze $\sum_{g\in\field_q, \deg(g) \leq N} D(g)$. Let $\mathcal{D}^+_K$ denote the set of effective divisors of $V$, where $K$ is the function field of the projective variety $V$. Note that the zeta function for $K$ can be written as:
\begin{align*}
\zeta_K(s) &= \sum_{D \in \mathcal{D}^+_K} \dfrac{1}{q^{\deg(D)s}}\end{align*}

If we let $a_n$ denote the number of $g \in \field_q[t]$ of degree $n$, then this becomes:
\begin{align*}
\zeta_K(s) &= \sum_{n \geq 0} \dfrac{a_n}{q^{ns}}\end{align*}

Since $\zeta_K(s)$ converges absolutely for $Re(s) > 1$ and has a simple pole at $s = 1$, the analogue of the Wiener-Ikehara Tauberian theorem for function fields implies that:
\begin{align*}
\sum_{n \leq N} a_n = \Theta(q^N)\end{align*}

Let $Z_N$ be given by:
\begin{align*}
Z_N = \prod_{\substack{d < n(g) < q^{N+1}\\ g\text{ prime}}}\bigg(1-\dfrac{d}{n(g)-\overline{n(g)}+1}\bigg)\end{align*}

Note that the expected number of trials of the algorithm is:

$$O(\alpha^{-d}Z_N^{-1}\prod_{\substack{2 \leq n(g) \leq d\\ g\text{ prime}}} n(g))$$

By an analogous argument to the above, we have at most $d$ factors of $\alpha$ which contribute at most $(2N\log(q))^{d-1}$ to the expecte number of trials, while the last product in the estimate contributes an amount bounded by $d^d$. Therefore, it suffices to bound $Z_N^{-1}$. Using a version of the prime number theorem for $\field_q[x]$, one can prove in an analogous way to the proof above that $Z_N^{-1}$ contributes $O((N\log(q))^{2d})$ iterations in expected value.

Therefore, we require $O((N\log q)^{d^2+d})$ trials before success in expectation. Each of the $\frac{d}{2}$ lists has expected length $O(N\log q)$, so we require $O((N\log q)^{d^2+d+1})$ primality tests. Note that this gives us a randomized algorithm that is polynomial in the logarithm of the size of the input (as there are $q^N$ possible norms of the ideal to generate). Since primality testing can be performed in time that is $O((N\log q)^6)$, this gives us an overall runtime that is $O((N\log q)^{d^2+d+7})$.

\section{Further Work}

The algorithm above uses similar ideas to Kalai's algorithm. Bach's algorithm for generating factored integers runs in fewer primality tests, and so one could ask whether the ideas of Bach could be adapted to this setting in order to reduce the number of primality tests required. In general, one could determine ways to make the above algorithm run faster, in particular by reducing the number of primality tests required.

The algorithms above work for a fixed number or function field. In particular, their runtime is polynomial treating the degree of the field extension as constant. It remains an open question whether there is an algorithm for generating random factored ideals that runs in polynomial time, irrespective of the degree of the field. The methods above would have to be altered significantly to do so, due to their exponential dependence on the degree $d$.

There are other generalizations of this problem that could be considered. Due to the natural way in which principal ideals arise in number fields, one could ask for a variant of this algorithm that generates principal ideals uniformly at random. Clearly we could use the above algorithm to generate an ideal uniformly at random, and then only accept if it is principal. Since this occurs with probability $1/h$, where $h$ is the class number, this would result in a polynomial time algorithm provided we had a polynomial time way to recognize principal ideals. Unfortunately, this is a difficult question in arbitrary number fields, as there are no known polynomial time algorithms to detect whether an ideal in an arbitrary number field is principal.

\bibliographystyle{amsplain}
\bibliography{genfact}

\end{document}